\newtheorem{thm}[equation]{Theorem}
\numberwithin{equation}{section}
\begin{document}
\raggedbottom \voffset=-.7truein \hoffset=0truein \vsize=8truein
\hsize=6truein \textheight=8truein \textwidth=6truein
\baselineskip=18truept

\def\mapright#1{\ \smash{\mathop{\longrightarrow}\limits^{#1}}\ }
\def\mapleft#1{\smash{\mathop{\longleftarrow}\limits^{#1}}}
\def\mapup#1{\Big\uparrow\rlap{$\vcenter {\hbox {$#1$}}$}}
\def\mapdown#1{\Big\downarrow\rlap{$\vcenter {\hbox {$\ssize{#1}$}}$}}
\def\mapne#1{\nearrow\rlap{$\vcenter {\hbox {$#1$}}$}}
\def\mapse#1{\searrow\rlap{$\vcenter {\hbox {$\ssize{#1}$}}$}}
\def\mapr#1{\smash{\mathop{\rightarrow}\limits^{#1}}}
\def\ss{\smallskip}
\def\vp{v_1^{-1}\pi}
\def\at{{\widetilde\alpha}}
\def\sm{\wedge}
\def\la{\langle}
\def\ra{\rangle}
\def\on{\operatorname}
\def\spin{\on{Spin}}
\def\kbar{{\overline k}}
\def\qed{\quad\rule{8pt}{8pt}\bigskip}
\def\ssize{\scriptstyle}
\def\a{\alpha}
\def\bz{{\Bbb Z}}
\def\im{\on{im}}
\def\ct{\widetilde{C}}
\def\ext{\on{Ext}}
\def\sq{\on{Sq}}
\def\eps{\epsilon}
\def\ar#1{\stackrel {#1}{\rightarrow}}
\def\br{{\bold R}}
\def\bC{{\bold C}}
\def\bA{{\bold A}}
\def\bB{{\bold B}}
\def\bD{{\bold D}}
\def\bh{{\bold H}}
\def\bQ{{\bold Q}}
\def\bP{{\bold P}}
\def\bx{{\bold x}}
\def\bo{{\bold{bo}}}
\def\si{\sigma}
\def\Ebar{{\overline E}}
\def\dbar{{\overline d}}
\def\Sum{\sum}
\def\tfrac{\textstyle\frac}
\def\tb{\textstyle\binom}
\def\Si{\Sigma}
\def\w{\wedge}
\def\equ{\begin{equation}}
\def\b{\beta}
\def\G{\Gamma}
\def\g{\gamma}
\def\k{\kappa}
\def\psit{\widetilde{\Psi}}
\def\tht{\widetilde{\Theta}}
\def\psiu{{\underline{\Psi}}}
\def\thu{{\underline{\Theta}}}
\def\aee{A_{\text{ee}}}
\def\aeo{A_{\text{eo}}}
\def\aoo{A_{\text{oo}}}
\def\aoe{A_{\text{oe}}}
\def\fbar{{\overline f}}
\def\endeq{\end{equation}}
\def\sn{S^{2n+1}}
\def\zp{\bold Z_p}
\def\A{{\cal A}}
\def\P{{\mathcal P}}
\def\cj{{\cal J}}
\def\zt{{\bold Z}_2}
\def\bs{{\bold s}}
\def\bof{{\bold f}}
\def\bq{{\bold Q}}
\def\be{{\bold e}}
\def\Hom{\on{Hom}}
\def\ker{\on{ker}}
\def\coker{\on{coker}}
\def\da{\downarrow}
\def\colim{\operatornamewithlimits{colim}}
\def\zphat{\bz_2^\wedge}
\def\io{\iota}
\def\Om{\Omega}
\def\Prod{\prod}
\def\e{{\cal E}}
\def\exp{\on{exp}}
\def\kbar{{\overline w}}
\def\xbar{{\overline x}}
\def\ybar{{\overline y}}
\def\zbar{{\overline z}}
\def\ebar{{\overline e}}
\def\nbar{{\overline n}}
\def\rbar{{\overline r}}
\def\et{{\widetilde E}}
\def\ni{\noindent}
\def\coef{\on{coef}}
\def\den{\on{den}}
\def\lcm{\on{l.c.m.}}
\def\vi{v_1^{-1}}
\def\ot{\otimes}
\def\psibar{{\overline\psi}}
\def\mhat{{\hat m}}
\def\exc{\on{exc}}
\def\ms{\medskip}
\def\ehat{{\hat e}}
\def\etao{{\eta_{\text{od}}}}
\def\etae{{\eta_{\text{ev}}}}
\def\dirlim{\operatornamewithlimits{dirlim}}
\def\gt{\widetilde{L}}
\def\lt{\widetilde{\lambda}}
\def\st{\widetilde{s}}
\def\ft{\widetilde{f}}
\def\sgd{\on{sgd}}
\def\lfl{\lfloor}
\def\rfl{\rfloor}
\def\ord{\on{ord}}
\def\gd{{\on{gd}}}
\def\rk{{{\on{rk}}_2}}
\def\nbar{{\overline{n}}}
\def\lg{{\on{lg}}}
\def\cR{\mathcal{R}}
\def\cT{\mathcal{T}}
\def\N{{\Bbb N}}
\def\Z{{\Bbb Z}}
\def\Q{{\Bbb Q}}
\def\R{{\Bbb R}}
\def\C{{\Bbb C}}
\def\l{\left}
\def\r{\right}
\def\mo{\on{mod}}
\def\vexp{v_1^{-1}\exp}
\def\notimm{\not\subseteq}
\def\Remark{\noindent{\it  Remark}}

\def\*#1{\mathbf{#1}}
\def\0{$\*0$}
\def\1{$\*1$}
\def\22{$(\*2,\*2)$}
\def\33{$(\*3,\*3)$}
\def\ss{\smallskip}
\def\ssum{\sum\limits}
\def\dsum{\displaystyle\sum}
\def\la{\langle}
\def\ra{\rangle}
\def\on{\operatorname}
\def\o{\on{o}}
\def\U{\on{U}}
\def\lg{\on{lg}}
\def\a{\alpha}
\def\bz{{\Bbb Z}}
\def\eps{\varepsilon}
\def\br{{\bold R}}
\def\bc{{\bold C}}
\def\bN{{\bold N}}
\def\nut{\widetilde{\nu}}
\def\tfrac{\textstyle\frac}
\def\b{\beta}
\def\G{\Gamma}
\def\g{\gamma}
\def\zt{{\Bbb Z}_2}
\def\zth{{\bold Z}_2^\wedge}
\def\bs{{\bold s}}
\def\bx{{\bold x}}
\def\bof{{\bold f}}
\def\bq{{\bold Q}}
\def\be{{\bold e}}
\def\lline{\rule{.6in}{.6pt}}
\def\xb{{\overline x}}
\def\xbar{{\overline x}}
\def\ybar{{\overline y}}
\def\zbar{{\overline z}}
\def\ebar{{\overline \be}}
\def\nbar{{\overline n}}
\def\rbar{{\overline r}}
\def\Mbar{{\overline M}}
\def\et{{\widetilde e}}
\def\ni{\noindent}
\def\ms{\medskip}
\def\ehat{{\hat e}}
\def\xhat{{\widehat x}}
\def\nbar{{\overline{n}}}
\def\minp{\min\nolimits'}
\def\N{{\Bbb N}}
\def\Z{{\Bbb Z}}
\def\Q{{\Bbb Q}}
\def\R{{\Bbb R}}
\def\C{{\Bbb C}}
\def\el{{\overline\ell}}
\def\TC{\on{TC}}
\def\dstyle{\displaystyle}
\def\ds{\dstyle}
\def\Remark{\noindent{\it  Remark}}
\title
{Topological complexity of spatial polygon spaces}
\author{Donald M. Davis}
\address{Department of Mathematics, Lehigh University\\Bethlehem, PA 18015, USA}
\email{dmd1@lehigh.edu}
\date{July 6, 2015}

\keywords{Topological complexity,   polygon spaces}
\thanks {2000 {\it Mathematics Subject Classification}: 58D29, 55R80, 70G40
.}

\maketitle
\begin{abstract} Let $\el=(\ell_1,\ldots,\ell_n)$ be an $n$-tuple of positive real numbers, and let $N(\el)$ denote the space of equivalence classes of oriented  $n$-gons in $\R^3$ with consecutive sides of lengths $\ell_1,\ldots,\ell_n$, identified under translation and rotation of $\R^3$.
 Using known results about the integral cohomology ring, we prove that its topological complexity satisfies $\TC(N(\el))= 2n-5$, provided that $N(\el)$ is nonempty and contains no straight-line polygons.
 \end{abstract}

\section{Main result}\label{intro}
The topological complexity, $\TC(X)$, of a topological space $X$ is, roughly, the number of rules required to specify how to move between any two points of $X$. A ``rule'' must be such that the choice of path varies continuously with the choice of endpoints. (See \cite[\S4]{F}.) We study $\TC(X)$, where $X=N(\el)$ is the space of equivalence classes of oriented $n$-gons in  $\R^3$ with sides of length $(\ell_1,\ldots,\ell_n)$, identified under translation and rotation of $\R^3$. (See, e.g., \cite{Kl} or \cite{HK}.) Here $\el=(\ell_1,\ldots,\ell_n)$ is an $n$-tuple of positive real numbers. Thus
 $$N(\el)=\{(z_1,\ldots,z_n)\in (S^2)^n:\ell_1z_1+\cdots+\ell_nz_n=0\}/SO(3).$$
We can think of the sides of the polygon as linked arms of a robot, and then $\TC(X)$ is the number of rules required to program the robot to move from any configuration to any other.

We say that $\el$ is {\it generic} if  there is no subset $S\subset[n]$ such that $\ds\sum_{i\in S}\ell_i=\sum_{i\not\in S}\ell_i$, and {\it nondegenerate} if for all $i$, $\ell_i<\ds\sum_{j\ne i}\ell_j$.
Being generic says that there are no straight-line polygons in $N(\el)$, while being nondegenerate says that $N(\el)$ is nonempty.
If $\el$ is generic and nondegenerate, then  $N(\el)$ is a nonempty simply-connected $(2n-6)$-manifold (\cite{Kl} or \cite[10.3.33]{Hbook}) and hence satisfies \begin{equation}\label{bnd}\TC(\el)\le 2n-5\end{equation} by \cite[Thm 4.16]{F}.

In this paper, we prove the following theorem.

\begin{thm}\label{Nthm} If $\el$ is generic and nondegenerate, then $\TC(N(\el))\ge 2n-5$ $($and hence $\TC(N(\el))=2n-5$  by (\ref{bnd})$)$.\end{thm}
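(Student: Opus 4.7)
The plan is to apply the standard zero-divisor cup-length lower bound for topological complexity: if $z_1,\ldots,z_k\in H^*(N(\el)\times N(\el);\Z)$ lie in the kernel of the diagonal homomorphism $\Delta^*$ and satisfy $z_1\cdots z_k\ne 0$, then $\TC(N(\el))\ge k+1$. Combined with (\ref{bnd}), it therefore suffices to exhibit $2n-6$ such zero-divisors whose product is nonzero in $H^{4n-12}(N(\el)\times N(\el);\Z)$. I would take all of them equal to a single well-chosen class and then expand via the binomial theorem.

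The only input needed from the known integral cohomology ring of $N(\el)$ is a single degree-$2$ class $x\in H^2(N(\el);\Z)$ with $x^{n-3}\ne 0$ in $H^{2n-6}(N(\el);\Z)\cong\Z$. The natural candidate is the K\"ahler class: under the given hypotheses on $\el$, the space $N(\el)$ is realized as a symplectic reduction of $(S^2)^n$ by $SO(3)$ and is a closed K\"ahler manifold of complex dimension $n-3$, so its symplectic class $x$ satisfies $\int_{N(\el)}x^{n-3}>0$; since $H^{2n-6}(N(\el);\Z)=\Z$ by closed orientability, $x^{n-3}\ne 0$ holds integrally. Equivalently, this nonvanishing can be read directly off the explicit integer cohomology presentation the paper cites.

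With such an $x$ in hand, set $\bar x:=1\otimes x-x\otimes 1$. Then $\Delta^*(\bar x)=x-x=0$, so $\bar x$ is a zero-divisor; and since $\bar x$ has even total degree, it commutes with itself, so
$$\bar x^{\,2n-6}=\sum_{k=0}^{2n-6}(-1)^k\binom{2n-6}{k}\,x^k\otimes x^{2n-6-k}.$$
For $k\ne n-3$, either $k>n-3$ (forcing $x^k=0$) or $2n-6-k>n-3$ (forcing $x^{2n-6-k}=0$), both by the dimension of $N(\el)$. Only the middle term survives, giving
$$\bar x^{\,2n-6}=(-1)^{n-3}\binom{2n-6}{n-3}\,x^{n-3}\otimes x^{n-3},$$
which is a nonzero element of $H^{4n-12}(N(\el)\times N(\el);\Z)\cong\Z$ by K\"unneth, since the central binomial coefficient is nonzero and $x^{n-3}$ generates $H^{2n-6}(N(\el);\Z)$ up to a nonzero integer. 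Thus $\bar x,\ldots,\bar x$ ($2n-6$ copies) are zero-divisors with nonzero product, giving $\TC(N(\el))\ge 2n-5$.

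The principal obstacle is verifying $x^{n-3}\ne 0$ integrally. Either the K\"ahler/symplectic volume argument above settles it cleanly, or one must extract the fact from the Hausmann--Knutson-type presentation by a direct calculation. Once this one nonvanishing statement is in place, the remainder of the argument is purely formal.
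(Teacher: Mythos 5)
Your formal zero-divisor computation is sound: granting an integral class $x\in H^2(N(\el))$ with $x^{n-3}\ne 0$, the expansion of $(1\ot x-x\ot 1)^{2n-6}$ collapses to the middle term $(-1)^{n-3}\binom{2n-6}{n-3}\,x^{n-3}\ot x^{n-3}$, which is nonzero because $H^{2n-6}(N(\el))\ot H^{2n-6}(N(\el))\cong\Z$ is the only K\"unneth contribution in top degree, and \cite[Cor 4.40]{F} then gives $\TC(N(\el))\ge 2n-5$. The genuine gap is exactly at the point you flag as ``the principal obstacle'': the existence of such an $x$ is never established. The symplectic (K\"ahler) class of the reduced space has periods depending linearly on $\ell_1,\ldots,\ell_n$, so for an arbitrary real generic $\el$ it is \emph{not} an integral class, and the step ``since $H^{2n-6}(N(\el);\Z)=\Z$ \ldots\ $x^{n-3}\ne0$ holds integrally'' does not follow as written. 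To repair this route you would need an additional argument: for instance, that the diffeomorphism type (hence the ring) depends only on the chamber of $\el$, so one may replace $\el$ by an integral length vector in the same chamber, for which the reduced symplectic class is integral; or that the top-power map $H^2(N(\el);\R)\to H^{2n-6}(N(\el);\R)$ is a nonzero polynomial and hence nonzero at some rational, therefore some integral, class. Your fallback, ``read it off the presentation by a direct calculation,'' is precisely the nontrivial issue: which degree-$2$ classes have nonvanishing top power depends on the chamber (it is not clear, for example, that $R^{n-3}\ne0$ in every chamber), so nothing is ``read off'' without real work.

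The paper's proof is designed to avoid needing any single class with nonzero top power. Since $H^{2n-6}(N(\el))\cong\Z$, some monomial $R^{e_0}V_{i_1}^{e_1}\cdots V_{i_r}^{e_r}$ of total degree $n-3$ is nonzero; choosing one with $r$ minimal, the mixed product $(R\ot1-1\ot R)^{2n-6-2r}\prod_j(V_{i_j}\ot1-1\ot V_{i_j})^2$ is shown to be nonzero using only the stated ring facts of Theorem \ref{cohNthm} (products of $n-2$ distinct $V_i$'s vanish, and all monomials of fixed degree in a fixed set of variables coincide), with minimality of $r$ forcing each squared factor to contribute its middle term. That argument is purely algebraic, requires no K\"ahler geometry and no perturbation of $\el$, and works uniformly over all chambers. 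Your approach can be completed, but only after supplying the integrality/chamber step above, which is the heart of the matter rather than a detail.
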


The proof of Theorem \ref{Nthm} uses the integral cohomology ring $H^*(N(\el))$, first described in \cite[Thm 6.4]{HK}.
Throughout the paper, all cohomology groups  have integral coefficients.
 To prove Theorem \ref{Nthm}, we will find  $2n-6$ classes $y_i\in H^2(N(\el))$ such that $\prod(y_i\ot1-1\ot y_i)\ne0$ in $H^{2n-6}(N(\el))\ot H^{2n-6}(N(\el))$.
This implies the theorem by the basic result that if in $H^*(X\times X)$ there is an $m$-fold nonzero product of classes of the form $y_i\otimes1-1\otimes y_i$,
 then $\TC(X)\ge m+1$.(\cite[Cor 4.40]{F})

\section{Proof}\label{Nsec}
In this section we prove Theorem \ref{Nthm}.
We begin by stating the information about the cohomology ring $H^*(N(\el))$ needed for our proof. Complete information is known, but the most delicate parts are not required here.
\begin{thm} \label{cohNthm} Let $\el$ be generic and nondegenerate.
\begin{enumerate}
\item The algebra $H^*(N(\el))$ is generated by  classes $R,V_1,\ldots,V_{n-1}$ in $H^2(N(\el))$.
\item The product of $n-2$ distinct $V_i$'s is 0.
\item If $d\le n-3$ and $S\subset\{1,\ldots,n-1\}$,
then all monomials $\dstyle{R^{e_0}\prod_{i\in S}V_i^{e_i}}$ with $e_i>0$ for $i\in S$ and $\dstyle\sum_{i\ge0} e_i=d$ are equal.
    \end{enumerate}
\end{thm}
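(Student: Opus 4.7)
The plan is to extract all three statements from the explicit generators-and-relations description of $H^*(N(\el))$ given in \cite[Thm 6.4]{HK}. That description produces degree-$2$ generators $R,V_1,\ldots,V_n$ together with relations that include: (i) a single linear dependence among $\{R,V_1,\ldots,V_n\}$, allowing one generator to be eliminated; (ii) a quadratic relation of the form $V_i^2=RV_i$ for each $i$ (after an appropriate sign choice for the generators); and (iii) ``long-subset'' relations $\prod_{i\in L}V_i=0$ whenever $L\subseteq[n]$ is long in the sense $\sum_{i\in L}\ell_i>\sum_{i\notin L}\ell_i$. Part (1) is then immediate from (i): use the linear dependence to express $V_n$ in terms of $R$ and $V_1,\ldots,V_{n-1}$, so these classes generate the algebra.

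Part (2) I would prove by a dimension count rather than by invoking a specific monomial relation. Because $\el$ is generic and nondegenerate, $N(\el)$ is a closed orientable manifold of real dimension $2n-6$, so $H^k(N(\el))=0$ for every $k>2n-6$. Any product of $n-2$ distinct $V_i$'s lies in $H^{2(n-2)}(N(\el))=H^{2n-4}(N(\el))$, and since $2n-4>2n-6$ the product vanishes.

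For part (3), the quadratic relation from (ii) is the whole story. Iterating $V_i^2=RV_i$ yields $V_i^{e_i}=R^{e_i-1}V_i$ for every $e_i\ge 1$, so whenever $e_i\ge 1$ for $i\in S$ and $\sum_{i\ge 0}e_i=d$,
$$R^{e_0}\prod_{i\in S}V_i^{e_i}\;=\;R^{e_0+\sum_{i\in S}(e_i-1)}\prod_{i\in S}V_i\;=\;R^{d-|S|}\prod_{i\in S}V_i,$$
a value depending only on $d$ and $S$. The constraint $d\le n-3$ is not used in this algebraic reduction; its role is to put the common monomial $R^{d-|S|}\prod_{i\in S}V_i$ in degree $2d\le 2n-6$, where it is not automatically killed by the dimensional vanishing of part (2). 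The principal obstacle I anticipate is reconciling sign conventions with \cite{HK}: if their quadratic relation reads $V_i^2+RV_i=0$ instead of $V_i^2=RV_i$, one must either replace each $V_i$ by $-V_i$ or keep track of a uniform sign $(-1)^{d-|S|}$ before asserting the equality in~(3). Once the quadratic relation is put in the form $V_i^2=RV_i$, the remainder of the argument is bookkeeping.
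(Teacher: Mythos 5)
Your proposal is correct, and parts (1) and (3) follow the paper's route exactly: cite \cite[Thm 6.4]{HK} for the presentation, fix the sign so that the quadratic relation reads $V_i^2=RV_i$ (the paper does this by negating $R$ rather than the $V_i$'s, but either normalization works), and iterate to collapse every monomial to $R^{d-|S|}\prod_{i\in S}V_i$. Where you genuinely diverge is part (2). The paper deduces it from the specific vanishing relations in \cite{HK} --- products $\prod_{i\in L}V_i$ vanish when $L\cup\{n\}$ is ``long,'' and nondegeneracy forces every $(n-1)$-element subset of $[n]$ to be long, so every product of $n-2$ distinct $V_i$'s dies. You instead observe that such a product lives in $H^{2n-4}(N(\el))$, which vanishes because $N(\el)$ is a closed $(2n-6)$-manifold. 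Your argument is more elementary and entirely sidesteps the combinatorics of long subsets; this matters because your recollection of those relations is actually slightly off (you state $\prod_{i\in L}V_i=0$ for $L$ long, whereas the correct condition involves $L\cup\{n\}$, and with your version a set of $n-2$ indices in $\{1,\dots,n-1\}$ need not be long if $\ell_n$ is large). Since you never invoke that relation, the slip is harmless. The only other inaccuracy is cosmetic: \cite[Thm 6.4]{HK} presents the ring directly on generators $R,V_1,\ldots,V_{n-1}$ rather than on $n$ classes subject to a linear dependence, so part (1) is immediate from the citation with no elimination step needed. The dimension-count argument does rely on knowing $N(\el)$ is a closed manifold of dimension $2n-6$ for generic, nondegenerate $\el$, but the paper already uses this fact (and orientability) in the main proof, so nothing extra is being assumed.
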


\begin{proof} The complete result was given in \cite[Theorem 6.4]{HK}, which contains much more detailed information about which products of $V_i$'s must be 0. Our part (2) follows from their result and nondegeneracy, since the sum of any $n-1$ $\ell_i$'s will be greater than half the sum of all the $\ell_i$'s. Our $R$ is the negative of the $R$ in \cite{HK}.
This turns their relation $V_i^2+RV_i$ into our equality $V_i^2=RV_i$, which easily implies part (3).

\end{proof}
\begin{proof}[Proof of Theorem \ref{Nthm}.] Since $N(\el)$ is a nonempty oriented $(2n-6)$-manifold, $$H^{2n-6}(N(\el))\approx\Z.$$
Thus some monomial $R^{e_0}V_{i_1}^{e_1}\cdots V_{i_r}^{e_r}$ with $\ds\sum_{j=0}^re_j=n-3$  must be nonzero.
Choose one with minimal $r$. By Theorem \ref{cohNthm}(2), $r\le n-3$. Then
$$(R\ot1-1\ot R)^{2n-6-2r}\prod_{j=1}^r(V_{i_j}\ot1-1\ot V_{i_j})^2$$
is our desired nonzero product. This includes the possibility of $r=0$. Note that a nonzero term in the expansion of the product must
involve just the middle term of
$$(V_{i_j}\ot1-1\ot V_{i_j})^2=V_{i_j}^2\ot1-2V_{i_j}\ot V_{i_j}+1\ot V_{i_j}^2$$
by minimality of $r$. The only possible nonzero part of the product is that in $H^{2n-6}(N(\el))\ot H^{2n-6}(N(\el))$, and this will equal
$$(-1)^{n-3}\tbinom{2n-6-2r}{n-3-r}2^r R^{n-3-r}V_{i_1}\cdots V_{i_r}\ot R^{n-3-r}V_{i_1}\cdots V_{i_r}\ne0$$
by Theorem \ref{cohNthm}(3). Thus $\TC(N(\el))\ge 2n-5$ by \cite[Cor 4.40]{F}.
\end{proof}

In \cite{D}, we obtain some similar (but slightly weaker and much more restricted) results for planar polygon spaces, by similar, but much more complicated, calculations. The difficulty there is that all we know about is mod-2 cohomology, and it is much harder to find nonzero products.

 \def\line{\rule{.6in}{.6pt}}

\end{document}